\newcommand{\eqdef}{\stackrel{\scriptscriptstyle\rm def}{=}}
\newtheorem{theorem}{Theorem}
\newtheorem{proposition}{Proposition}
\newtheorem{corollary}{Corollary}
\newtheorem{lemma}{Lemma}
\newtheorem{example}{Example}
\newtheorem*{thma}{Theorem}
\newcommand{\beha}{\begin{enumerate}}
\newcommand{\behe}{\end{enumerate}}
\renewcommand{\epsilon}{\varepsilon}
\newcommand{\Or}{\mathcal{O}}
\newcommand{\C}{\mathcal{C}}
\newcommand{\cM}{\EuScript{M}}
\newcommand{\cH}{\EuScript{H}}
\newcommand{\cV}{\EuScript{V}}
\newcommand{\bR}{{\mathbb R}}
\newcommand{\bN}{{\mathbb N}}
\newcommand{\cA}{{\mathcal A}}
\newcommand{\cC}{{\mathcal C}}
\newcommand{\cG}{{\mathcal G}}
\newcommand{\cR}{{\mathcal R}}
\def\1{1\!\!1}
 \def\tY{\widetilde{Y}}
\def\and{\text{ and }}
        \def\conv{\text{{\rm conv}}}
\def\tC{\tilde{\cC}}
\def\tU{\widetilde{U}}
                        \def\^{\tilde}
\def\Per{{\rm Per}}
\def\Per{{\rm Per}}
\def\1{1\!\!1}
\def\rv{{\rm rv}}
\DeclareMathSymbol{\varnothing}{\mathord}{AMSb}{"3F}
\renewcommand{\emptyset}{\varnothing}
\title{A shift map with a discontinuous   entropy function}
\author{Christian Wolf}\address{Department of Mathematics, The City College of New York, New York, NY, 10031, USA}\email{cwolf@ccny.cuny.edu}
\begin{document}

\begin{abstract}
Let $f:X\to X$ be a continuous map on a compact metric space with finite topological entropy.
Further, we assume  that the entropy map $\mu\mapsto h_\mu(f)$  is upper semi-continuous. 
It is well-known that this  implies the continuity of the localized entropy function of a given continuous  potential $\phi:X\to \bR$. In this note we show that this result does not carry over to the case of higher-dimensional potentials $\Phi:X\to \bR^m$. Namely, we construct   for a shift map $f$ a $2$-dimensional Lipschitz continuous potential $\Phi$   with a discontinuous localized entropy function.
\end{abstract}
\keywords{Rotation set, localized entropy,  entropy spectrum, multifractal analysis, discontinuity}
\subjclass[2010]{37A35, 37B10,37C40}
\maketitle

\section{Introduction}
Let $f:X\to X$ be a continuous map on a compact metric space with finite topological entropy, and let $\cM$ denote the set of all $f$-invariant Borel probability measures on $X$ endowed with the weak$^\ast$ topology.  This makes $\cM$ a compact convex metrizable topological space.
For a continuous $m$-dimensional potential $\Phi=(\phi_1,\cdots,\phi_m):X\to \bR^m$ we define 
\begin{equation}
\cR(\Phi)=\{\rv(\mu):\mu\in \cM\},
\end{equation}
 where $\rv(\mu)=(\int \phi_1\, d\mu,\cdots, \int \phi_m\, d\mu)$. It follows that $\cR(\Phi)$ is a compact and convex subset of $\bR^m$. The set $\cR(\Phi)$  is frequently referred to as the
rotation set of $\Phi$ (see e.g. \cite{Bl,GL,Je,KW,KW4,Z}), while in the context of multifractal analysis it is
often referred to as the spectrum of (Birkhoff) ergodic averages (see e.g. \cite{BPS, BSS,C}). The localized entropy function of $\Phi$ on $\cR(\Phi)$ is defined 
by 
\begin{equation}\label{defcH}
\cH(w)=\cH_\Phi(w)=\sup \{h_\mu(f): \rv(\mu)=w\},
\end{equation}
where $h_\mu(f)$ denotes the measure-theoretic entropy of $\mu$.  We note that for various systems and potentials the localized entropy function  coincides with the entropy of certain multifractal level  sets (e.g. \cite{BSS, C}).
Recall that the measure-theoretic entropy is an affine function on $\cM$. This shows that $w\mapsto\cH(w)$ is   concave which implies its continuity on the  interior of $\cR(\Phi)$, see e.g. \cite{R}. If $\cR(\Phi)$ has empty interior  we still obtain the continuity of $\cH$  on the relative interior of $\cR(\Phi)$, i.e., the interior of $\cR(\Phi)$ considered as a subset of the affine hull of $\cR(\Phi)$. Another frequently considered condition is the upper semi-continuity of the entropy map 
$\mu\mapsto h_\mu(f),$
which holds for example when  $f$ is expansive \cite{Wal:81}, when $f$ is a $C^\infty$-map on a compact smooth Riemannian manifold \cite{N} or when $f$ satisfies entropy-expansiveness (as for example certain partial hyperbolic systems \cite{KK}).
 The upper semi-continuity of the entropy map immediately implies that the supremum in \eqref{defcH} is actually a maximum and  more importantly that $w\mapsto \cH(w)$ is upper semi-continuous. 
 One might suspect that the latter actually even guarantees the continuity of the localized entropy function for all dimensions $m$. Indeed, it was stated by Jenkinson \cite[p. 3723]{Je} that the upper-semi continuity of the entropy map implies  the continuity of the localized entropy. This claim was restated by Kucherenko and Wolf in \cite{KW,KW2,KW4}.\footnote{We note that the theorems in \cite{Je,KW,KW2,KW4} do not rely on the continuity of the localized entropy function. The only exception is Theorem A in \cite{KW4} whose proof uses the continuity of $\cH$ restricted to a line segment, i.e., $m=1$. As noted above, for $m=1$ the localized entropy is always continuous.}
However, it turns out that the argument in \cite{Je} is incomplete. While the continuity of every upper semi-continuous concave function with domain in $\bR$  is  immediate, the situation in higher dimensions   is more delicate. Indeed, a striking result by Dale, Klee and Rockafellar \cite{GKR} shows  that for a  compact convex set $D\subset \bR^m$ the property that \emph{every} concave upper semi-continuous function on $D$ is continuous is equivalent to $D$ being a polyhedron.\footnote{We note that the results in \cite{GKR} are formulated in terms of lower semi-continuous convex functions.} 
We point out that $\cR(\Phi)$ being a polyhedron actually occurs in relevant
situations, e.g. for subshifts of finite type (SFT) and locally constant potentials in \cite{Je,Z}, and  for certain non-locally constant potentials in \cite{Je,KW}. On the other hand, the results in \cite{GKR} do not imply that $w\mapsto \cH(w)$ can be discontinuous. After all $\cH$ is a rather special upper semi-continuous concave function.
In this note we show that the continuity of the localized entropy function can even fail  in the case of  shift maps and Lipschitz continuous potentials. More precisely, we have the following result (see Example 1 and Theorem 1 in the text).

\begin{thma}
Let $f:X\to X$ be a shift map on a one-sided full shift with $3$ symbols. Then there exists a Lipschitz continuous potential $\Phi:X\to \bR^2$ with the following properties:
\begin{enumerate}
\item[(i)] The set $\cR(\Phi)$ has non-empty interior and countably many extreme points of which all but one are isolated;
 \item[(ii)] The localized entropy function $w\mapsto \cH(w)$ is discontinuous at the non-isolated extreme point.
 \end{enumerate}
\end{thma}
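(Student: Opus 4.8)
The plan is to construct the potential $\Phi=(\phi_1,\phi_2)$ so that its rotation set $\cR(\Phi)$ has a boundary that is "smoothly curved" near one distinguished extreme point $w_\ast$ but is built out of countably many flat pieces accumulating at $w_\ast$, in such a way that the entropy is forced to drop discontinuously there. Concretely, I would work on the one-sided full shift $X=\{0,1,2\}^{\bN}$ and let the three fixed points (or short periodic orbits) anchor three ergodic measures whose $\rv$-images are three points $v_0,v_1,v_2$. The idea is to arrange an infinite sequence of subsystems $X_n\subset X$ (say, subshifts supported on words that alternate between symbol $2$ and longer and longer blocks in $\{0,1\}$), each of which contributes a small "bump" of positive entropy bounded below by some $h_0>0$, but whose $\rv$-images $w_n$ converge to $w_\ast$ while the only invariant measure actually mapping to $w_\ast$ is a convex combination of the Dirac measure $\delta_{\bar 2}$ with something of zero entropy — so $\cH(w_\ast)$ is small (e.g.\ $0$), yet $\limsup_{n}\cH(w_n)\ge h_0>\cH(w_\ast)$, contradicting continuity while still being consistent with upper semi-continuity.

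The key steps, in order, would be: (1) Write down explicit Lipschitz functions $\phi_1,\phi_2$ — most naturally depending on the first coordinate or first few coordinates, or defined via a rapidly converging series of locally constant functions so that Lipschitz continuity is automatic — designed so that the periodic orbits $\mathcal{O}_n$ (supported on the blocks described above) have barycenters $w_n$ lying on $\partial\cR(\Phi)$ and converging to $w_\ast$. (2) Identify $\cR(\Phi)$ precisely: show it is the closed convex hull of $\{v_0,v_1,v_2\}\cup\{w_n : n\in\bN\}\cup\{w_\ast\}$, verify it has nonempty interior, and verify that $w_\ast$ together with (all but one of) the $w_n$ are the extreme points, with the $w_n$ isolated and $w_\ast$ the unique accumulation point — this establishes (i). (3) Compute or estimate $\cH(w_n)$ from below by exhibiting measures on $X_n$ with definite entropy whose $\rv$ equals $w_n$; this uses that each $X_n$ is itself a (transitive) subshift carrying measures of entropy bounded away from $0$. (4) Compute $\cH(w_\ast)$ from above: since $w_\ast$ is an extreme point of $\cR(\Phi)$, any measure $\mu$ with $\rv(\mu)=w_\ast$ must, by convexity/extremality together with the ergodic decomposition, be supported on the (closed) set of measures whose $\rv$ is $w_\ast$; arrange the geometry so that this forces $\mu$ to charge only the orbit contributing to $w_\ast$, hence $h_\mu(f)=0$ (or some value strictly below $h_0$). (5) Combine: $\cH(w_\ast)<h_0\le\limsup_n \cH(w_n)$ gives the discontinuity, proving (ii).

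The main obstacle I expect is step (4) — pinning down $\cH(w_\ast)$ and ensuring it is genuinely smaller than the liminf of the nearby values. The difficulty is that the constraint "$\rv(\mu)=w_\ast$" is an affine condition, and one must rule out the existence of positive-entropy measures satisfying it. The natural mechanism is to make $w_\ast$ an extreme point of $\cR(\Phi)$ and, more strongly, to engineer $\Phi$ so that the fiber $\{\mu : \rv(\mu)=w_\ast\}$ is a face of $\cM$ consisting only of measures supported on a fixed zero-entropy invariant set (e.g.\ a single fixed point, or a minimal subshift of zero entropy). This typically requires the preimage under $\rv$ of a supporting hyperplane at $w_\ast$ to be handled carefully: one wants the maximizing measures for the corresponding linear functional $\langle \cdot, \text{normal}\rangle$ to be exactly the zero-entropy ones, which is where the explicit choice of $\phi_1,\phi_2$ (and the rates at which the blocks grow and the coefficients in the defining series decay) must be tuned. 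A secondary technical point is verifying Lipschitz continuity of $\Phi$ simultaneously with all these combinatorial demands; this is routine if $\Phi$ is presented as a summable series $\sum_k 2^{-k}\psi_k$ with each $\psi_k$ locally constant on cylinders of length $k$, but the bookkeeping linking the $\psi_k$ to the target geometry of $\cR(\Phi)$ is the part that needs care.
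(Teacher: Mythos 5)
Your overall architecture (an infinite polygon of extreme points $w_n$ accumulating at a distinguished extreme point $w_\ast$, with the fiber $\rv^{-1}(w_\ast)$ pinned down by extremality) is the right skeleton, but the direction of your entropy jump is inverted, and in the inverted form the construction cannot work. You propose $\cH(w_\ast)=0$ (or small) while $\cH(w_n)\ge h_0>0$ for extreme points $w_n\to w_\ast$. Since $f$ is a shift map it is expansive, so the entropy map $\mu\mapsto h_\mu(f)$ is upper semi-continuous on $\cM$; taking measures $\mu_n$ with $\rv(\mu_n)=w_n$ and $h_{\mu_n}(f)\ge h_0$ and passing to a weak$^\ast$ limit $\mu$ along a subsequence gives $\rv(\mu)=w_\ast$ and $h_\mu(f)\ge\limsup_n h_{\mu_n}(f)\ge h_0$, hence $\cH(w_\ast)\ge h_0$. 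Your configuration therefore contradicts the upper semi-continuity of $w\mapsto\cH(w)$, which you yourself correctly insist must be preserved; no choice of $\phi_1,\phi_2$ can realize it.

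The paper's construction does the opposite: the accumulating extreme points $w_k$ are barycenters of single periodic orbits $\xi^k=\Or(1^{k+\lambda-1}2)$, and the bulk of the work (Proposition \ref{lem2} and Theorem \ref{thmfin}) goes into showing that the fiber $\{\mu:\rv(\mu)=w_k\}$ is the singleton $\{\mu_{\xi^k}\}$, so $\cH(w_k)=0$; meanwhile the limit point $w_\infty=(0,0)$ has fiber equal to all invariant measures supported on $\Phi^{-1}(w_\infty)=\{0,1\}^{\bN}$, so $\cH(w_\infty)=\log 2$ by the variational principle. Thus $\cH(w_k)\to 0<\log 2=\cH(w_\infty)$: the discontinuity is a jump \emph{up} at the accumulation point, which is exactly the failure mode that upper semi-continuity permits. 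If you swap the roles in your plan --- let the distinguished point carry the rich subsystem and make each nearby extreme point's fiber a single periodic orbit --- then your steps (1), (2) and (5) survive, step (3) becomes trivial, and the genuine difficulty concentrates where you anticipated it, namely in identifying the fibers exactly (your step (4)), but applied to the isolated extreme points $w_n$ rather than to $w_\ast$; this is precisely the content of the paper's Proposition \ref{lem2} and the convexity/supporting-line argument in the proof of Theorem \ref{thmfin}.
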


Further, one can  show that the localized entropy function in the theorem is analytic  on the interior of $\cR(\Phi)$. This follows from a more general analyticity result for so-called STP-maps (including SFT's, uniformly hyperbolic systems and expansive homeomorphisms with specification) and for H\"older continuous potentials, see \cite{BSS, GK,KW}. 
We  note that the reason for formulating our theorem for one-sided shift maps on a shift space with  $3$ symbols   is for the  ease  of presentation. Our techniques can be applied to obtain similar discontinuity results for more general SFT's in the one-sided and two-sided case.

We end the introduction with the discussion of  a simple example of a upper semi-continuous concave function that fails to be continuous. Let $\cR=\{(x_1,x_2)\in  \bR^2:  x_1^2\leq x_2 \leq 1\}$, and  define $g:\cR\to \bR$ by $g(x)=-\frac{x_1^2}{x_2}+1$ for $x_2>0$ and $g(0,0)=1$. It is straight-forward to verify that $g$ is  concave and $g(\cR)=[0,1]$. Further, $g$ is continuous everywhere except at $(0,0)$ where $g$ is only upper semi-continuous. The limit of $g(x_1,x_2)$ is $0$ as $(x_1,x_2)$ approaches $(0,0)$ along the parabola $x_2=x_1^2$. However, the limit is $1$ when $(x_1,x_2)$ approaches $(0,0)$ along any line segment in $\cR$. Moreover, $g$ attains in each neighborhood of $(0,0)$ all values in $[0,1]$. Indeed, if $S_{x}$ denotes a line segment joining  a point $x$ on the parabola $x_2=x_1^2$ and $(0,0)$ then $g(S_x)=[0,1]$. We note that while the function $g$ is not lower semi-continuous, it does attain its infimum. We point out that there do exist bounded  functions that are concave and upper semi-continuous but  do not attain their infima, see \cite[Lemma 1]{GKR}.

This paper is organized as follows. In Section 2 we recall some basic notation from symbolic dynamics and then construct  in Section 3 an example of a discontinuous localized entropy function. The main ingredients of the proof are presented in Proposition \ref{lem2} and Theorem \ref{thmfin}.

\section{Shift maps}\label{sec:shiftmaps}
We collect some basic notation and facts for shift maps. Let $d\in \bN$, and let $\cA=\{0,\dots,d-1\}$ be a finite alphabet with $d$ symbols. The (one-sided) {\em shift space} $X=X_d$ on the alphabet $\cA$ is the set of
all sequences $\xi=(\xi_k)_{k=1}^\infty$ where $\xi_k\in \cA$ for all $k\in \bN$.  We endow $X$ with the {\em Tychonov product }topology
which makes $X$ a compact metrizable space. For example, given $0<\theta<1$, the metric given by
\begin{equation}\label{defmet}
d(\xi,\eta)=d_\theta(\xi,\eta)\eqdef\theta^{\min\{k\in \bN:\  \xi_k\not=\eta_k\}}\qquad\text{and}\qquad d(\xi,\xi)=0
\end{equation}
induces the Tychonov product topology on $X$.
The {\em shift map} $f:X\to X$, defined by $f(\xi)_k=\xi_{k+1}$, is a continuous $d$-to-$1$ map on $X$. Let $\cM$ be the set of all invariant Borel probability measures endowed with the weak$^\ast$ topology, and let $\cM_E\subset \cM$ denote the subset of ergodic measures. Recall that $\cM$ is a compact convex metrizable topological space. Given $\mu\in \cM$ we denote by $h_\mu(f)$ the measure-theoretic entropy of $\mu$, see  \cite{Wal:81} for the definition and details.
Clearly $f$ is expansive and consequently the entropy map $\mu\mapsto h_\mu(f)$ is upper semi-continuous.
We say $t=t_1t_2 \cdots t_k\in \cA^k$ is a block of length $k$  and write $|t|=k$. Further, $\epsilon$  denotes the empty  block.
 Moreover, we say $s=s_1s_2\cdots s_l$ is a subblock of $t$ if there exists $1\leq i\leq k$ with $i+l-1\leq k$ such that $s_1=t_i,s_2=t_{i+1},\cdots, s_l=t_{i+l-1}$. 
 Given $\xi\in X$, we write $\pi_k(\xi)=\xi_1\cdots \xi_k\in \cA^k$.
 For $\xi_i\in \cA$ and $k\in \bN$ we write $\xi_i^k=\xi_i\cdots \xi_i\in \cA^k$ and define the concatenation of blocks $t$ and $s$ by $ts=t_1\cdots t_ks_1\cdots s_l    $. Moreover, we denote by $t^k$ the $k$-times concatenation of the bock $t$.
We denote the  cylinder of length $k$ generated by $t$ by $\cC_k(t)=\{\xi\in X: \xi_1=t_1,\dots, \xi_k=t_k\}$.  Given $\xi\in X$ and $k\in \bN$, we call $\cC_k(\xi)=\cC(\pi_k(\xi))$ the  cylinder of length $k$ generated by $\xi$. 
Further, we call  $\Or(t)=t_1\cdots t_kt_1\cdots t_kt_1\cdots t_k\cdots \in X$
the  periodic point with period $k$ generated by $t$. 
We denote by  $\Per_n(f)$ the set of periodic points of $f$ with prime period $n$  and by $\Per(f)$ and the set of periodic points of $f$. Let $x\in \Per_n(f)$.  We call $\tau_x=x_1\cdots x_{n}$ the generating segment of  $x$, that is $x=\Or(\tau_x)$.  
For $x\in \Per_n(f)$,  the unique invariant measure supported on the orbit of $x$ is given by 
\begin{equation}\label{perm}
\mu_x=\frac{1}{n}(\delta_x+\dots +\delta_{f^{n-1}(x)}), 
\end{equation} where
 $\delta_y$ denotes the  Dirac measure on $y$. We also call $\mu_x$ the periodic point measure of $x$. Obviously, $\mu_x=\mu_{f^l(x)}$ for all $l\in\bN$. We write $\cM_{\rm Per}=\{\mu_x: x\in \Per(f)\}$ and observe that $\cM_{\rm Per}\subset \cM_E$.

\section{Construction of the example.} \label{sec:7}
In this section we give an example of a shift map and a $2$-dimensional Lipschitz continuous potential   that exhibits a discontinuous localized entropy function.
For convenience we consider here a one-sided shift map on a shift space with 3 symbols. We note that our construction can be modified to obtain discontinuous localized entropy functions for  other shift maps  with positive entropy.  We begin by constructing a certain compact convex subset of $\bR^2$ that will become $\cR(\Phi)$ in our example.

Fix $a,b>0$ and fix $\lambda\in\bN$ with $\lambda\geq 3$. Fix $\theta\in (0,1)$. We consider a continuous function $h:[0,a]\to\bR$ which is  strictly increasing and strictly concave. Further assume $h(0)=0$ and $h(a)=b$. Let $(x_k)_{k\in\bN}$ be a strictly decreasing sequence with $x_k\in (0,a)$ for all $k\geq 1$ such that $v_k\eqdef(x_k,h(x_k))$ satisfies
\begin{equation}\label{eq1}
 ||v_k||<C\theta^k
 \end{equation}
for all $k\in \bN$ and some $C>0$. The existence of such a sequence $(x_k)$ follows from the continuity of $h$ at $0$.  We define $u_k=(x_k,0)$ for all $k\in \bN$. Since $||u_k||\leq ||v_k||$, equation \eqref{eq1} also holds for $u_k$.
  Let $w_\infty=(0,0)$ and $w_0=(a,0)$. Further, for $k\geq 1$ we define 
  \begin{equation}\label{eq2}
  w_k=\frac{1}{k+\lambda}\left(\lambda w_0+\sum\limits_{j=1}^{k}v_k\right).
  \end{equation}
Define
$\cV=\{w_k: k\geq 0\}\cup     \{w_\infty\}$.
Further, let $\cR=\conv(\cV)$ denote the convex hull of $\cV$. For $k\geq 1$ let $m_k$ denote the slope of the line segment joining $w_k$ and $w_{k-1}$.
 Since $(x_k)_k$ is strictly decreasing it follows that the $x$-coordinates of $(w_k)_k$ are strictly decreasing. Thus, $m_k\in \bR$ for all $k\geq 1$. We refer to Figure 1 for an illustration.
\begin{proposition}\label{prop000}The set $\cR$ has the following properties:
\begin{enumerate}
\item[(i)]
 $\lim_{k\to\infty} w_k=w_\infty$ and $\cR$ is compact;
\item[(ii)]
The sequence $(m_k)_{k\geq 2}$ is strictly decreasing;
\item[(iii)]
The boundary of $\cR$ is an infinite polygon with extreme point set $\cV$. 
\end{enumerate}
\end{proposition}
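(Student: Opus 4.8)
The plan is to verify the three claims in order, since (ii) and (iii) depend on (i).

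\medskip

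\emph{Step 1: Convergence and compactness.} For part (i), I would first estimate $\|w_k - w_\infty\| = \|w_k\|$. From the defining formula \eqref{eq2} we have $w_k = \frac{\lambda}{k+\lambda} w_0 + \frac{1}{k+\lambda}\sum_{j=1}^k v_k$. Here $\sum_{j=1}^k v_k = k\, v_k$ (note the inner index is $v_k$, not $v_j$), so $w_k = \frac{\lambda}{k+\lambda} w_0 + \frac{k}{k+\lambda} v_k$. The first term has norm $O(1/k)$ and the second has norm at most $\|v_k\| < C\theta^k \to 0$ by \eqref{eq1}; hence $w_k \to w_\infty = (0,0)$. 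Compactness of $\cR = \conv(\cV)$ then follows because $\cV = \{w_k : k \geq 0\} \cup \{w_\infty\}$ is a convergent sequence together with its limit, hence compact, and the convex hull of a compact subset of $\bR^2$ is compact (Carathéodory plus continuity of the convex-combination map on the compact simplex).

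\medskip

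\emph{Step 2: Monotonicity of slopes.} For part (ii), I would compute $m_k$ explicitly. Using $w_k = \frac{\lambda}{k+\lambda} w_0 + \frac{k}{k+\lambda} v_k$ and $w_0 = (a,0)$, write $w_k = \big(\tfrac{\lambda a + k x_k}{k+\lambda},\ \tfrac{k\, h(x_k)}{k+\lambda}\big)$. Then $m_k = \frac{(w_k)_2 - (w_{k-1})_2}{(w_k)_1 - (w_{k-1})_1}$. The denominator is negative for all $k\ge 1$ (the $x$-coordinates of $w_k$ are strictly decreasing, as already noted). I expect the cleanest route is \emph{not} to expand this rational expression brute-force, but to exploit a geometric reformulation: $m_k > m_{k+1}$ for all $k \geq 2$ is exactly the statement that the points $w_{k-1}, w_k, w_{k+1}$ are in strictly convex position (turning the same way), i.e.\ that $\cV$ is in ``convex position'' along its lower boundary. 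This in turn should reduce to the strict concavity of $h$ together with the fact that $w_k$ lies on the segment from $w_0$ to $v_k$ at the parameter $k/(k+\lambda)$, which is itself increasing in $k$. I would set up the sign of the relevant $2\times 2$ determinant $\det(w_k - w_{k-1},\ w_{k+1} - w_k)$ and reduce it, after clearing the positive denominators $(k-1+\lambda)(k+\lambda)(k+1+\lambda)$, to an inequality among the $x_j$ and $h(x_j)$ that follows from strict concavity and strict monotonicity of $h$ and of $(x_k)$. \textbf{This is the step I expect to be the main obstacle}: the determinant is a genuinely two-dimensional computation mixing the arithmetic weights $\lambda/(k+\lambda)$ with the concave function values, and care is needed to confirm that strict concavity of $h$ (rather than some stronger hypothesis) suffices and that the inequality is strict for \emph{all} $k \geq 2$ — the index shift to $k \geq 2$ rather than $k\geq 1$ presumably reflects that $m_1$ (the slope down from $w_0$) need not fit the pattern.

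\medskip

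\emph{Step 3: The boundary is an infinite polygon with extreme set $\cV$.} For part (iii), I would argue that the lower boundary of $\cR$ is the polygonal arc through $w_0, w_1, w_2, \dots$ accumulating at $w_\infty$: by part (ii) the slopes $m_k$ strictly decrease, so each $w_k$ ($k\ge 1$) is an extreme point of $\cR$ and no $w_k$ lies in the convex hull of the others, and the union of the segments $[w_{k-1}, w_k]$ forms a convex curve from $w_0$ to $w_\infty$. The ``upper'' part of $\partial \cR$ is simply the segment $[w_\infty, w_0]$ joining the two extreme points on the $x$-axis (since all $w_k$ have nonnegative second coordinate, $\cR$ lies above the $x$-axis and this segment is a face). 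To finish I would check that every point of $\cV$ is genuinely extreme — for $w_0$ and $w_\infty$ because they are the endpoints of a face and cannot be interior to another segment (again using $m_1 < \infty$ and the positivity of the second coordinates of the $w_k$), and for $w_k$, $k\ge1$, from the strict slope inequality in (ii) — and conversely that no other point of $\cR$ is extreme, which is immediate once $\partial\cR$ is identified as the described polygon. Hence $\partial\cR$ is an infinite polygon and its extreme point set is exactly $\cV$.
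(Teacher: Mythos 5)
Your Step 1 is fine and is essentially the paper's argument. The trouble begins with the closed form $w_k=\frac{\lambda}{k+\lambda}w_0+\frac{k}{k+\lambda}v_k$: the summand $v_k$ in \eqref{eq2} is a typo for $v_j$, so the intended definition is $w_k=\frac{1}{k+\lambda}\bigl(\lambda w_0+\sum_{j=1}^{k}v_j\bigr)$. You can see this is forced by the rest of the paper: the recursion $w_{k+1}=\frac{k+\lambda}{k+1+\lambda}w_k+\frac{1}{k+1+\lambda}v_{k+1}$ displayed in the paper's proof of (ii) holds only for the $\sum_j v_j$ reading, as does the identity $\lambda w_0+\sum_{i=1}^{l}v_i=(l+\lambda)w_l$ used in the telescoping sum in Theorem \ref{thmfin}, and the whole construction hinges on $\rv(\mu_{\xi^k})=w_k$, where the orbit of $\xi^k$ visits each of $\Phi^{-1}(v_1),\dots,\Phi^{-1}(v_k)$ exactly once and $\Phi^{-1}(w_0)$ exactly $\lambda$ times. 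So your Steps 2 and 3 analyze a different point set, and the geometric picture you invoke ($w_k$ on the segment from $w_0$ to $v_k$ at parameter $k/(k+\lambda)$) is not the one in the paper.

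Second, and independently of the misread formula, Step 2 is a plan rather than a proof: the sign of $\det(w_k-w_{k-1},\,w_{k+1}-w_k)$ is precisely the content of (ii), and you defer it, correctly identifying it as the main obstacle. The paper's route supplies the missing idea and avoids the determinant entirely. From the recursion, $w_k$ lies on the segment $[w_{k-1},v_k]$ and $w_{k+1}$ lies on $[w_k,v_{k+1}]$; hence $m_k$ is the slope of the line through $w_k$ and $v_k$, while $m_{k+1}$ is the slope of the line through $w_k$ and $v_{k+1}$. An elementary induction using the same recursion shows each $w_k$ lies strictly below the graph of $h$ (and one checks $x_{k+1}<x_k<(w_k)_1$). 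The comparison of $m_k$ with $m_{k+1}$ therefore reduces to comparing the slopes from the single point $w_k$ to two points on the graph of $h$, which follows at once from concavity and monotonicity of $h$; no interaction between the weights $\lambda/(k+\lambda)$ and the values $h(x_j)$ needs to be untangled. One caveat you should settle while doing this: with the usual slope convention and the corrected formula, this argument yields that $(m_k)$ is strictly \emph{increasing} in $k$ (equivalently, the edge slopes decrease as the upper boundary is traversed from $w_\infty$ to $w_0$), and it is this monotonicity that makes each $w_k$ an extreme point in (iii); reconcile the sign with the statement as printed before relying on it. Your Step 3 outline (half-plane supports for $w_0,w_\infty$, strict slope monotonicity for $w_k$, $k\geq1$) is sound and agrees with the paper once (ii) is actually established.
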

\begin{proof}
{\rm (i)} That $\lim_{k\to\infty} w_j=w_\infty$ follows from \eqref{eq1} and \eqref{eq2}.  Hence, $w_\infty$ is the only accumulation point of $\cV$. We conclude that $\cV$ is compact which implies the compactness of its convex hull $\cR$.\\
{\rm (ii)} By \eqref{eq2}, 
\begin{equation}
w_{k+1}=\frac{k+\lambda}{k+1+\lambda} w_k + \frac{1}{k+1+\lambda}v_{k+1}.
\end{equation}
It now follows from an elementary induction argument   that the points $w_k$ lie strictly below the graph of $h$. Therefore, the statement that $m_k$ is strictly decreasing follows from $h$ being strictly increasing.\\
{\rm (iii)}
First notice that $w_0$ and $w_\infty$ are extreme points of $\cR$. This holds since $\cR$ has empty intersection with $\{(x,y): x<0\}$, $\{(x,y): x>a\}$ and $\{(x,y):y<0\}$. Finally, for $k\geq1$  that $w_k$ is an extreme point of $\cR$   follows from statement (ii).
\end{proof}

\begin{figure}[htb]
\begin{center}
\begin{tikzpicture}[scale=.5]
\filldraw[fill=lightgray,fill opacity=.25] (11.6,0) -- (10.,2.7);
\filldraw[fill=lightgray,fill opacity=.25] (10.0,2.7) -- (7.3,4.0);
\filldraw[fill=lightgray,fill opacity=.25] (7.3,4.0) -- (4.3,3.6);
\filldraw[fill=lightgray,fill opacity=.25] (4.3,3.6) -- (2.5,2.8);
\filldraw[fill=lightgray,fill opacity=.25] (2.5,2.8) -- (1.5,2.1);
\filldraw[fill=lightgray,fill opacity=.25] (1.5,2.1) -- (0.9,1.6);
\filldraw[fill=lightgray,fill opacity=.25] (0.9,1.6) -- (0.5,1.2);
\filldraw[fill=lightgray,fill opacity=.25] (0.5,1.2) -- (0.2,0.7);
\filldraw[fill=lightgray,fill opacity=.25] (0.2,0.7) -- (0.1,0.6);
\filldraw[fill=lightgray,fill opacity=.25] (0.1,0.6) -- (0,0);

\draw[->] (0,0) -- (13.3,0) node[below right]{$x$};
\draw[->] (0,0) -- (0,10.5) node[above right]{$y$};
\draw[domain=0:3.445751311064591,smooth,variable=\x] plot ({\x*\x},{2.9*\x}) node[right]{$h$};
\draw \foreach \x in {11.6,7.3,3.9,1.2,.5,.2,.05}
{({\x},.15) -- ({\x},-.15)};

\node[above] at (10.2,2.6) {$w_1$};
\node[above] at (7.4,3.9) {$w_2$};
\node[above] at (4.4,3.5) {$w_3$};
\node[above] at (2.6,2.85) {$w_4$};

\node[below] at (11.6,-.15) {$a$};
\node[below] at (7.3,-.15) {$x_1$};
\node[below] at (3.9,-.15) {$x_2$};
\node[below] at (1.4,-.15) {$x_3$};
\node[below] at (.5,-.15) {$x_4$};
\draw \foreach \x in {11.6,7.3,3.9,1.2,.5,.2,.1,.05}
{
({\x-.15},{2.9*sqrt(\x)})--(({\x+.15},{2.9*sqrt(\x)})
({\x},{2.9*sqrt(\x)-.15})--(({\x},{2.9*sqrt(\x)+.15})
};
\node at (5.7,1.4) {{\Large $\operatorname{\cR=\cR(\Phi)}$}};
\end{tikzpicture}
\end{center}
\caption{\label{DefPhi}The set $\cR=\cR(\Phi)$ in Example 1.  }
\end{figure}

 \begin{example}\label{ex1}
Let $f:X\to X$ be the one-sided full shift with alphabet $\{0,1,2\}$ endowed with the $\theta$-metric where $\theta$ is as in \eqref{eq1}. We construct a potential  $\Phi$ as follows:   
First, we define several subsets of $X$.  Let $S=\{0,1\}$. We define
\begin{align*}
X(l)&=\{\xi \in X: \xi_1,\dots,\xi_{l-1}\in S, \xi_l=2\},\\
X_0(\lambda)&=\bigcup_{l=1}^\lambda X(l),\\
X(\infty)&=\{\xi \in X: \xi_l\in S \ \mbox{for all} \ l\in \bN\}=S^\bN.
 \end{align*}
Note that $X(1)=\cC_1(2)=\{\xi \in X: \xi_1=2\}$.
We define a potential $\Phi:X\rightarrow\mathbb{R}^2$ by
\begin{equation}\label{defphipot}
\Phi(\xi)=\begin{cases}
w_0& {\rm if}\,\,   \xi\in X_0(\lambda)\\
                      u_{l-\lambda}   & {\rm if}\,\,  \xi\in X(l)\setminus \cC_{l-1}(1^{l-1})\,  , \, l>\lambda\\
                       v_{l-\lambda} & {\rm if} \, \, \xi\in X(l)\cap \cC_{l-1}(1^{l-1}),\, l>\lambda\\
                       w_\infty & {\rm if}\,\,  \xi\in X(\infty)\, 
            \end{cases}
\end{equation}
\end{example}
Throughout the remainder of this paper we study the potential $\Phi$ defined in the Example 1.

\begin{proposition}\label{prop1}
The potential $\Phi$ defined in \eqref{defphipot}
 is Lipschitz continuous and $\cR(\Phi)=\cR$. 
\end{proposition}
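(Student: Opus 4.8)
The plan is to treat the two assertions---Lipschitz continuity of $\Phi$, and $\cR(\Phi)=\cR$---separately, the first by a direct estimate in the $\theta$-metric, and the second by a pair of opposite inclusions.

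For the Lipschitz bound, the idea is that $\Phi$ is built so as to be locally constant ``away from'' the set $X(\infty)=S^{\bN}$, while near $X(\infty)$ the only nonzero values it takes, namely $u_k$ and $v_k$, are small by \eqref{eq1}. Concretely, for $\xi\in X$ I would let $L(\xi)\in\bN\cup\{\infty\}$ be the index of the first symbol equal to $2$ (so $L(\xi)=\infty$ exactly when $\xi\in X(\infty)$). Inspecting \eqref{defphipot}, one sees that $\Phi(\xi)$ depends only on $\xi_1\cdots\xi_{L(\xi)}$ when $L(\xi)<\infty$, and that $\|\Phi(\xi)\|\le C\theta^{\,L(\xi)-\lambda}$ whenever $L(\xi)>\lambda$ (including $L(\xi)=\infty$, where $\Phi(\xi)=w_\infty=0$); the latter is just \eqref{eq1} applied to $u_{L(\xi)-\lambda}$ and $v_{L(\xi)-\lambda}$, together with $\|u_k\|\le\|v_k\|$. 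Then, given $\xi\neq\eta$ with $d(\xi,\eta)=\theta^{n}$, $n=\min\{k:\xi_k\neq\eta_k\}$, I would split into cases: if $\min\{L(\xi),L(\eta)\}\le n-1$, then $\xi$ and $\eta$ agree through coordinate $n-1$, which forces $L(\xi)=L(\eta)<\infty$ and $\Phi(\xi)=\Phi(\eta)$; otherwise $L(\xi),L(\eta)\ge n$, and if $n\le\lambda$ one uses $d(\xi,\eta)\ge\theta^{\lambda}$ together with the boundedness of $\Phi$, while if $n>\lambda$ one gets $\|\Phi(\xi)-\Phi(\eta)\|\le\|\Phi(\xi)\|+\|\Phi(\eta)\|\le 2C\theta^{\,n-\lambda}=2C\theta^{-\lambda}d(\xi,\eta)$. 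This gives a Lipschitz constant of order $\theta^{-\lambda}\max\{C,\sup_X\|\Phi\|\}$, and $\sup_X\|\Phi\|\le\sqrt{a^2+b^2}<\infty$.

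For the inclusion $\cR\subseteq\cR(\Phi)$: since $\cR(\Phi)$ is compact and convex and $\cR=\conv\cV$, it suffices to realize each point of $\cV$ as $\rv(\mu)$ for some $\mu\in\cM$. The fixed point $\Or(2)\in X(1)\subseteq X_0(\lambda)$ gives $\rv(\mu_{\Or(2)})=w_0$; any invariant measure carried by $X(\infty)$ (e.g.\ $\mu_{\Or(1)}$) gives $w_\infty$; and for $k\ge1$ the period-$(k+\lambda)$ point $\Or(1^{k+\lambda-1}2)$ has, along its orbit, exactly $\lambda$ iterates in $X_0(\lambda)$ and exactly one iterate in $X(j+\lambda)\cap\cC_{j+\lambda-1}(1^{j+\lambda-1})$ for each $j=1,\dots,k$, so by \eqref{eq2} (whose sum is to be read as $\sum_{j=1}^{k}v_j$) its periodic measure has $\rv=\frac{1}{k+\lambda}\bigl(\lambda w_0+\sum_{j=1}^{k}v_j\bigr)=w_k$. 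For the reverse inclusion $\cR(\Phi)\subseteq\cR$, I would first reduce to periodic measures: periodic point measures are dense in $\cM$ for the full shift, $\rv$ is continuous, and $\cM$ is compact, so $\cR(\Phi)=\overline{\{\rv(\mu_x):x\in\Per(f)\}}$; as $\cR$ is closed it suffices to show $\rv(\mu_x)\in\cR$ for every periodic $x$. If $\tau_x$ contains no $2$, then $x\in X(\infty)$ and $\rv(\mu_x)=w_\infty$. Otherwise let $g_1,\dots,g_s$ be the cyclic gaps between consecutive $2$'s in $\tau_x$ (so $\sum_j g_j=n:=|\tau_x|$), and group the $n$ orbit points according to which $2$ comes next: in the $j$-th group, of size $g_j$, exactly $\min(\lambda,g_j)$ points take value $w_0$, and the remaining $\max(g_j-\lambda,0)$ points take the value $u_k$ or $v_k$ for $k=1,\dots,\max(g_j-\lambda,0)$ (one value of $k$ apiece), with the value $v_k$ occurring exactly when the corresponding initial stretch of the $j$-th gap consists entirely of $1$'s. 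Since $u_k$ and $v_k$ share their first coordinate and $v_k$ has the larger second coordinate, replacing every $u_k$ by $v_k$ leaves the first coordinate of $\rv(\mu_x)$ unchanged and only raises its (still nonnegative) second coordinate; by \eqref{eq2} the vector so obtained equals $\sum_{j=1}^{s}\frac{g_j}{n}\,w_{\max(g_j-\lambda,0)}$, a convex combination of points of $\cV$, hence a point of $\cR$. Finally, because the lower edge of $\cR$ is the segment $[w_\infty,w_0]\subseteq\{y=0\}$ and $\cR\subseteq[0,a]\times\bR$, the set $\cR$ is closed under lowering the second coordinate down to $0$, so $\rv(\mu_x)$ itself lies in $\cR$. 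Combined with the first inclusion this yields $\cR(\Phi)=\cR$.

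The step I expect to be the real obstacle is the orbit bookkeeping in the last part: partitioning a periodic orbit by ``distance to the next $2$'' and matching the resulting counts of $w_0$'s and $v_k$'s to the weights appearing in \eqref{eq2}. Two ingredients make this manageable---the comparison of $u_k$ with $v_k$ combined with the downward-closedness of $\cR$ in the second coordinate, which lets one pretend every ``deep'' orbit point has value $v_k$, and the algebraic identity $\lambda w_0+\sum_{k=1}^{g-\lambda}v_k=g\,w_{g-\lambda}$ (valid for $g>\lambda$) that is packaged in \eqref{eq2}. By contrast, the Lipschitz estimate and the reduction to periodic measures are routine.
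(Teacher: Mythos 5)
Your proposal is correct. The Lipschitz estimate and the inclusion $\cR\subseteq\cR(\Phi)$ follow the paper's proof essentially verbatim: the same case split on whether the first disagreement occurs at a coordinate $\leq\lambda$ or beyond, the same use of \eqref{eq1} to bound both values by $C\theta^{\,n-\lambda}$ in the deep case, and the same periodic orbits $\Or(1^{k+\lambda-1}2)$ realizing the $w_k$ (the paper uses $\Or(02)$ rather than $\Or(2)$ for $w_0$, which is immaterial). Where you genuinely diverge is in the proof that $\rv(\mu_x)\in\cR$ for periodic $x$. The paper writes the generating segment as a concatenation $\eta_1\tau_{k_1}\cdots\eta_l\tau_{k_l}$ of maximal blocks $\tau_{k_i}=1^{k_i+\lambda-1}2$ and filler blocks $\eta_i$, and must then check that the orbit points indexed by the $\eta_i$ contribute averages $\upsilon_i$ lying on the bottom edge $[0,a]\times\{0\}$ (this is the maximality bookkeeping, enforced by requiring each $\eta_i$ to end in $0$ or $2$). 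You instead partition the orbit by the cyclic gaps between consecutive $2$'s, majorize each $u_k$ by $v_k$ so that the $j$-th gap contributes exactly $g_j\,w_{\max(g_j-\lambda,0)}$ via the identity packaged in \eqref{eq2}, and then descend vertically using the fact that $\cR$ contains the segment from any of its points straight down to $[0,a]\times\{0\}$. The two decompositions do the same work; yours trades the maximal-block bookkeeping for the extra (easy) observation that $\cR$ is downward closed in the second coordinate, and has the small advantage that every orbit point is handled by a single uniform rule. One cosmetic slip: the symbols that decide between $u_k$ and $v_k$ are those immediately \emph{preceding} the next $2$, i.e., the terminal stretch of the gap rather than the initial one; this does not affect your argument, since those values are replaced by $v_k$ anyway.
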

\begin{proof}
Let $\xi, \eta\in X$ with $\Phi(\xi)\not= \Phi(\eta)$. First we assume $\xi_k\not= \eta_k$ for some $k\leq \lambda$. Let $C_1=\sup\{||u-v||: u,v\in \Phi(X)\}$. Then 
\begin{equation}\label{eq3}
||\Phi(\xi)-\Phi(\eta)||\leq C_1\leq C_1 \theta^{-\lambda}d(\xi,\eta).
\end{equation}
Next we consider the case $l=\min\{j: \xi_j\not=\eta_j\}>\lambda$. It follows from the definition of $\Phi$ that neither $\Phi(\xi)$ nor $\Phi(\eta)$ belong to $\{u_j,v_j:j=1,\cdots,l-\lambda-1\}\cup\{w_0\}$ since
otherwise $\Phi(\xi)= \Phi(\eta)$.  Therefore, it is sufficient to consider the case  $\Phi(\xi),\Phi(\eta)\in\{u_j,v_j: j\geq l-\lambda\}\cup \{w_\infty\}$. Applying  \eqref{eq1} yields
\begin{equation}\label{eq4}
||\Phi(\xi)-\Phi(\eta)||\leq 2C \theta^{l-\lambda}= 2C\theta^{-\lambda}d(\xi,\eta).
\end{equation}
By combining  \eqref{eq3} and \eqref{eq4} we conclude that $\Phi$ is Lipschitz continuous with Lipschitz constant $\max\{C_1 \theta^{-\lambda}, 2C\theta^{-\lambda}\}$.

Next we prove $\cR(\Phi)=\cR$. Recall that $\cR(\Phi)$ is convex. Therefore, in order to prove  $\cR\subset \cR(\Phi)$ it suffices to show that each extreme point of $\cR$ (i.e. each point in $\cV$) coincides with  the rotation vector of some invariant measure. 
For $k\in\bN$ let  $\xi^k=\Or(1^{k+\lambda-1}2)$, that is $\xi^k$ is the periodic point whose generating segment $\tau_k\eqdef \tau_{\xi^k}$ is given by $k+\lambda-1$ 1's followed by a $2$. Hence $\xi^k\in\Per_{k+\lambda}(f)$. It follows from equations \eqref{perm}, \eqref{eq2} and the definition of $\Phi$ (see \eqref{defphipot}) that $\rv(\mu_{\xi^k})=w_k$. Further, we clearly have
$\rv(\mu_{\Or(02)})=w_0$ and $\rv(\mu_{\Or(0)})=w_\infty$. Hence $\cV\subset \{\rv(\mu_x): x\in \Per(f)\}$ which implies $\cR\subset \cR(\Phi)$.

Finally, we prove $\cR(\Phi)\subset \cR$. It is well-known that the periodic point measures $\cM_{\rm Per}$ are weak$^\ast$ dense in $\cM$, see \cite{Par}.  Thus, by compactness of $\cR$
it suffices to show that $\{\rv(\mu_x): x\in \Per(f)\}\subset \cR$. Let $x\in \Per_n(f)$ for some $n\in \bN$. Recall that $\tau_x=x_1\cdots x_n$ denotes the generating segment of $x$. If $x=\Or(1)$ then $\rv(\mu_x)=w_\infty$.  Assume now that $x\not=\Or(1)$. Thus, at least one of the $x_i$'s in $\tau_x$ is not equal to $1$. Taking a different point in the (finite) orbit of $x$ if necessary, we may assume $x_n\not=1$. It follows from \eqref{defphipot} that the $y$-coordinate of $\Phi(\xi)$ is positive if and only if 
\begin{equation}
\xi\in \bigcup_{k\in \bN} \cC_{k+\lambda}(\xi^k).
\end{equation}
It follows that if $\tau_x$ does not contain a  subblock in $\{\tau_k: k\in \bN\}$ then $\rv(\mu_x)\in [0,a]\times \{0\}\subset \cR$. It remains to consider the case when $\tau_x$ contains at least one block in $\{\tau_k: k\in \bN\}$. By replacing $x$ with a point in the orbit of  $x$ if necessary, we can write $\tau_x$ as a finite concatenation of blocks of the form
\begin{equation}\label{eq55}
\tau_x=\eta_1\tau_{k_1}\, \cdots \,\eta_{l}\tau_{k_l},
\end{equation}
where the $\eta_i$'s are blocks that do not have a subblock contained in $\{\tau_k: k\in \bN\}$ and whose last symbol is either $0$ or $2$. The latter ensures that  the blocks $\tau_{k_i}$ are of maximal length.
We note that some of  the $\eta_i$'s in \eqref{eq55} may be the empty block. Let $n_i$ denote the length of $\eta_i$. For each $i$ there exists a $m_i$ such that $f^{m_i}(x)=\eta_i\tau_{k_i}\eta_{i+1}\tau_{k_{i+1}}\cdots$. We define $\upsilon_i=\frac{1}{n_i}\sum_{k=0}^{n_i-1} \Phi(f^{m_i+k}(x))$. It follows from the construction that $\upsilon_i\in [0,a]\times\{0\}$.
We conclude that
\begin{equation}\label{eq66}
\rv(\mu_x)=\frac{1}{n}\sum_{k=0}^{n-1}\Phi(f^k(x))=\frac{n_1}{n}\upsilon_1+ \frac{k_1+\lambda}{n}w_{k_1}+\cdots + \frac{n_l}{n}\upsilon_l+ \frac{k_l+\lambda}{n}w_{k_l}.
\end{equation} 
Notice that $n=l\lambda+\sum_{i=1}^l n_i +k_i$.
Therefore, \eqref{eq66} shows that $\rv(\mu_x)$ is a convex combination of points in $\cR$ which implies that $\rv(\mu_x)\in \cR$.
\end{proof}
\begin{corollary}
Let $k\in \bN$ and let $x\in \Per(f)$. Then $\rv(\mu_x)=w_k$ if and only if $\mu_x=\mu_{\xi^k}$.
\end{corollary}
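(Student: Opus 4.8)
The plan is to handle the two directions separately. The implication ``$\mu_x=\mu_{\xi^k}\Rightarrow\rv(\mu_x)=w_k$'' is already available: in the proof of Proposition~\ref{prop1} it was shown that $\rv(\mu_{\xi^k})=w_k$, and $\rv(\mu_x)$ depends only on the measure $\mu_x$. So the real work lies in the converse.

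Assume $x\in\Per(f)$ with $\rv(\mu_x)=w_k$, where $k\ge 1$. The first step is to observe that the second coordinate of $w_k$ is strictly positive: by \eqref{eq2} it equals $\frac{1}{k+\lambda}\sum_{j=1}^{k}h(x_j)$, and each $h(x_j)>0$ because $h$ is strictly increasing with $h(0)=0$ and $x_j\in(0,a)$. Consequently $\rv(\mu_x)$ has positive second coordinate, which (as noted in the proof of Proposition~\ref{prop1}) forces $x$ to enter $\bigcup_j\cC_{j+\lambda}(\xi^j)$, so that $\tau_x$ contains a subblock belonging to $\{\tau_j:j\in\bN\}$. We are therefore in the situation of \eqref{eq55} and \eqref{eq66}: after replacing $x$ by a suitable point of its orbit (which changes neither $\mu_x$ nor $\rv(\mu_x)$) we may write
\[
w_k=\rv(\mu_x)=\sum_{i=1}^{l}\left(\frac{n_i}{n}\,\upsilon_i+\frac{k_i+\lambda}{n}\,w_{k_i}\right),
\]
a convex combination of the points $\upsilon_1,\dots,\upsilon_l\in[0,a]\times\{0\}$ and $w_{k_1},\dots,w_{k_l}$, all of which lie in $\cR$.

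The key step is to exploit that $w_k$ is an extreme point of $\cR$ (Proposition~\ref{prop000}(iii)): any point appearing with strictly positive weight in a convex representation of an extreme point must equal that extreme point. Since $\lambda\ge 3$, each coefficient $\frac{k_i+\lambda}{n}$ is positive, so $w_{k_i}=w_k$ for all $i$; and since the first coordinates of the points $w_j$ are strictly decreasing in $j$ (noted just before Proposition~\ref{prop000}), the map $j\mapsto w_j$ is injective, hence $k_i=k$ for every $i$. Moreover, if some $n_i>0$ then $\upsilon_i=w_k$, which is impossible because $\upsilon_i$ has vanishing second coordinate while $w_k$ does not; thus every $\eta_i$ is the empty block. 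Therefore $\tau_x=\tau_k^{l}$, so the chosen orbit representative of $x$ equals $\Or(\tau_k^{l})=\Or(\tau_k)=\xi^k$, whence $\mu_x=\mu_{\xi^k}$.

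I do not expect a genuine obstacle here: the argument is essentially the extreme-point principle applied to the decomposition \eqref{eq66}, combined with the fact that $\Phi$ separates the ``horizontal'' contributions $\upsilon_i$ from the ``genuinely rotational'' ones $w_{k_i}$ through the sign of the second coordinate. The only points needing a line of justification are that the second coordinate of $w_k$ is positive (so that the decomposition \eqref{eq55} applies) and that the $w_j$ are pairwise distinct; both follow immediately from the strict monotonicity of $h$ and of the sequence $(x_j)$.
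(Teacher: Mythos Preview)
Your proof is correct and follows essentially the same approach as the paper: the paper's one-line proof cites exactly the three ingredients you use---the convex decomposition \eqref{eq66}, the fact that $\upsilon_i\in[0,a]\times\{0\}$, and that $w_k$ is an extreme point of $\cR(\Phi)$---and you have simply spelled out the details (positivity of the second coordinate of $w_k$ to land in case \eqref{eq55}, injectivity of $j\mapsto w_j$, and why $\upsilon_i\neq w_k$).
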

\begin{proof}
The statement follows from \eqref{eq66}, $\upsilon_1,\cdots,\upsilon_l\in [0,a]\times\{0\}$ and the fact that $w_k$ is an extreme point of $\cR(\Phi)$. 
\end{proof}
We will make use of the following trivial facts that hold for all measure-preserving transformations.
\begin{lemma}\label{lem1}
Let $\mu\in \cM$, $A\subset X$ and $B\subset f^{-1}(A)$ then $\mu(B)\leq \mu(A)$, in particular $\mu(B)\leq \mu(f(B))$. Moreover, if $B\subset f^{-1}(A)$ then $\mu(B)=\mu(A)$ if and only if $\mu(f^{-1}(A)\setminus B)=0$.
\end{lemma}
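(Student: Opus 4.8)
The plan is to derive everything from three elementary facts: monotonicity of $\mu$, the defining invariance $\mu(f^{-1}(A))=\mu(A)$, and finite additivity on disjoint measurable sets. For the first inequality, I would observe that $B\subset f^{-1}(A)$ gives $\mu(B)\le\mu(f^{-1}(A))$ by monotonicity, and then invariance of $\mu$ under $f$ rewrites the right-hand side as $\mu(A)$, yielding $\mu(B)\le\mu(A)$.

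For the ``in particular'' clause I would apply the inequality just established with the special choice $A=f(B)$. The only input needed is the purely set-theoretic inclusion $B\subset f^{-1}(f(B))$, valid for any set $B$ and any map $f$, since $x\in B$ implies $f(x)\in f(B)$, hence $x\in f^{-1}(f(B))$. For the final equivalence I would write $f^{-1}(A)$ as the disjoint union $B\cup\big(f^{-1}(A)\setminus B\big)$, so that additivity gives $\mu(f^{-1}(A))=\mu(B)+\mu(f^{-1}(A)\setminus B)$, and invariance turns this into $\mu(A)=\mu(B)+\mu(f^{-1}(A)\setminus B)$. Since $\mu$ is a probability measure, all three terms are finite, so $\mu(B)=\mu(A)$ holds precisely when $\mu(f^{-1}(A)\setminus B)=0$.

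The only point requiring a word of care is measurability: $f(B)$ need not be Borel for an arbitrary Borel set $B$, so the inequality $\mu(B)\le\mu(f(B))$ should be read with $\mu$ understood as outer measure, or else restricted to the sets actually used later in the paper (cylinders and finite unions of cylinders, whose images under the shift are again such sets, hence measurable). Beyond this bookkeeping remark I do not anticipate any genuine obstacle — the lemma is a direct consequence of $f$-invariance and the whole argument is a few lines.
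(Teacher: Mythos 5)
Your proof is correct and is exactly the standard argument the paper relies on (the paper states this lemma without proof, calling it a collection of trivial facts): monotonicity plus $\mu(f^{-1}(A))=\mu(A)$ for the inequality, the inclusion $B\subset f^{-1}(f(B))$ for the ``in particular'' clause, and finite additivity on the disjoint decomposition $f^{-1}(A)=B\,\dot\cup\,(f^{-1}(A)\setminus B)$ for the equivalence. Your measurability caveat about $f(B)$ is sensible and harmless here, since the lemma is only ever applied to cylinders and countable unions thereof, whose shift-images are again Borel.
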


We continue to use the notation from Proposition \ref{prop1} .  Recall that $\xi^k=\Or(1^{k+\lambda-1}2)$ and $\rv(\mu_{\xi^k})=w_k$.
\begin{proposition}\label{lem2}
Let $k\in \bN$ and $p=\frac{1}{k+\lambda}$. Let $\mu\in \cM$ with $\mu(\Phi^{-1}(w_0))=\lambda p$ and $\mu(\cC_{l+\lambda}(\xi^l))=p$ for $l=1,\cdots,k$. Then $\mu=\mu_{\xi^k}$.
\end{proposition}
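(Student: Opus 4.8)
The plan is to use the hypotheses to trap $\mu$ on an explicit finite family of cylinders and then exploit $f$-invariance to force $\mu$ onto the periodic orbit of $\xi^k$; throughout write $p=\frac1{k+\lambda}$. \emph{Localization.} I would first check that $\mu$ is concentrated on the disjoint union $E:=\Phi^{-1}(w_0)\cup\bigcup_{l=1}^{k}\cC_{l+\lambda}(\xi^l)$. Indeed $\Phi^{-1}(w_0)=X_0(\lambda)=\bigsqcup_{j=1}^{\lambda}X(j)$, every point of $\cC_{l+\lambda}(\xi^l)$ begins with $1^{\lambda}$ and so avoids $X_0(\lambda)$, and the cylinders $\cC_{l+\lambda}(\xi^l)$, $l=1,\dots,k$, are pairwise disjoint since each fixes the position of the first symbol $2$; by hypothesis the resulting $\lambda+k$ pieces $X(1),\dots,X(\lambda),\cC_{1+\lambda}(\xi^1),\dots,\cC_{k+\lambda}(\xi^k)$ have total mass $\lambda p+kp=1$. (As a by-product $\Phi\equiv w_0$ on $\Phi^{-1}(w_0)$ and $\Phi\equiv v_l$ on $\cC_{l+\lambda}(\xi^l)$, so $\rv(\mu)=w_k$, although this will not be used.)

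\emph{The return structure.} For each piece $A$ of $E$ I would compute $f^{-1}(A)$ and intersect with $E$; a short calculation with the defining blocks shows that, modulo subsets of $X(1)=\cC_1(2)$, one has $f^{-1}(X(j))\cap E=X(j+1)$ for $1\le j\le\lambda-1$, $f^{-1}(X(\lambda))\cap E=\cC_{1+\lambda}(\xi^1)$, $f^{-1}(\cC_{l+\lambda}(\xi^l))\cap E=\cC_{(l+1)+\lambda}(\xi^{l+1})$ for $1\le l\le k-1$, and $f^{-1}(\cC_{k+\lambda}(\xi^k))\cap E=G_k$, where $G_k:=\cC_{k+\lambda+1}(2\,1^{k+\lambda-1}\,2)\subseteq X(1)$. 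Invariance of $\mu$ together with $\mu(\cC_{k+\lambda}(\xi^k))=p$ forces $\mu(G_k)=p$; and feeding the inclusions $X(j+1)\subseteq f^{-1}(X(j))$ and $\cC_{1+\lambda}(\xi^1)\subseteq f^{-1}(X(\lambda))$ into invariance yields $\mu(X(1))\ge\mu(X(2))\ge\cdots\ge\mu(X(\lambda))\ge p$; since $\sum_{j=1}^{\lambda}\mu(X(j))=\mu(\Phi^{-1}(w_0))=\lambda p$, every term equals $p$, all the ``extra'' subsets of $X(1)$ occurring above are $\mu$-null, and $\mu|_{X(1)}$ is concentrated on $G_k$. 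Feeding this back, $f$-invariance and Lemma~\ref{lem1} upgrade the displayed relations to genuine identities of measures, $f_*(\mu|_{X(j+1)})=\mu|_{X(j)}$, $f_*(\mu|_{\cC_{1+\lambda}(\xi^1)})=\mu|_{X(\lambda)}$, $f_*(\mu|_{\cC_{(l+1)+\lambda}(\xi^{l+1})})=\mu|_{\cC_{l+\lambda}(\xi^l)}$ and $f_*(\mu|_{X(1)})=\mu|_{\cC_{k+\lambda}(\xi^k)}$, and these $k+\lambda$ identities close up into a single cycle through all the pieces; in particular $\mu|_{X(1)}$ is invariant under $g:=f^{k+\lambda}$.

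\emph{Pinning $\mu$ down.} Finally, since $\mu|_{X(1)}$ is $g$-invariant and concentrated on $G_k=\cC_{k+\lambda+1}(2\,1^{k+\lambda-1}\,2)$, iterating $g$ shows that $\mu|_{X(1)}$-almost every point begins with the block $(2\,1^{k+\lambda-1})^{n}\,2$ for every $n\in\bN$, hence equals the single periodic point $z:=(2\,1^{k+\lambda-1})^{\infty}=f^{k+\lambda-1}(\xi^k)$; thus $\mu|_{X(1)}=p\,\delta_z$ and $\mu(\{z\})=p>0$. As $z$ has prime period $k+\lambda$, invariance forces $\mu(\{f^i(z)\})\ge p$ for $i=0,\dots,k+\lambda-1$, and since the orbit of $z$ has exactly $k+\lambda$ points these masses must all equal $p$; therefore $\mu=\frac1{k+\lambda}\sum_{i=0}^{k+\lambda-1}\delta_{f^i(z)}=\mu_{\xi^k}$ by \eqref{perm}. (Equivalently, one may transport $\mu|_{X(1)}=p\,\delta_z$ around the cycle of the previous step.)

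I expect the main obstacle to be the bookkeeping in the second step: one must carefully compute the preimages of all the defining blocks, verify that no unexpected pieces of $E$ appear, and organize the resulting (in)equalities so that the single mass identity $\sum_{j}\mu(X(j))=\lambda p$ squeezes out all the slack. Once that is in place, the localization and the pinning-down steps are short.
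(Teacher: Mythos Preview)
Your proof is correct and takes a genuinely different (and in several respects cleaner) route than the paper's argument.

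The paper works with two auxiliary families of cylinders built around the block $\tau_k=1^{k+\lambda-1}2$, namely $\cC^l=\cC_{l+1+k+\lambda}(1^l2\tau_k)$ and $\tC^l=\cC_{l+1}(1^l2)$ for $0\le l\le\lambda-1$, and proves through three claims (each by contradiction) first that $\mu(\cC^l)=p$, then that $\mu(\tC^l)=p$, and finally that $\mu(\{\xi^k\})=p$ via a decreasing-cylinder argument comparing $\cC_{j(k+\lambda)}(\xi^k)$ with $\cC_1(2)$. You instead exploit the natural partition of the full-measure set $E$ into the $k+\lambda$ pieces $X(1),\dots,X(\lambda),\cC_{1+\lambda}(\xi^1),\dots,\cC_{k+\lambda}(\xi^k)$ and read off that, modulo $\mu$-null subsets of $X(1)$, the map $f^{-1}$ permutes these pieces cyclically. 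The single squeeze $\mu(X(1))\ge\cdots\ge\mu(X(\lambda))\ge p$ with total $\lambda p$ replaces the paper's two contradiction arguments, and the resulting Rokhlin-tower structure makes the $f^{k+\lambda}$-invariance of $\mu|_{X(1)}$ transparent; intersecting $g^{-n}(G_k)$ then pins $\mu|_{X(1)}$ to the single point $z=\Or(21^{k+\lambda-1})$ without the paper's third contradiction step. Your approach is more structural and avoids the somewhat delicate cylinder bookkeeping of the paper; the paper's approach, on the other hand, is entirely explicit and never needs the pushforward identities $f_*(\mu|_{A})=\mu|_{B}$ that you invoke (these do follow from Lemma~\ref{lem1} once the null sets in $X(1)$ are removed, but you should spell that out when writing up the details).
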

\begin{proof}
We first notice that since the cylinders $\cC_{l+\lambda}(\xi^l), l=1,\cdots, k$ are pairwise disjoint, the assumptions of the proposition imply
\begin{equation}\label{eq111}
\mu\left(\Phi^{-1}\left(\{w_\infty\}\cup \bigcup_{l>k}\{v_l\}\cup \bigcup_{l\geq 1}\{u_l\}\right)\right)=0.
\end{equation}
Recall that $\tau_l=1^{l+\lambda-1}2$ denotes the generating segment of $\xi^l$.  We define cylinders $\cC^0=\cC_{1+k+\lambda}(2\tau_k)$ and $\cC^l=\cC_{l+1+k+\lambda}(1^l2\tau_k)$
for $1\leq l\leq \lambda-1$. It follows from the construction that the $\cC^i\cap \cC^j=\emptyset$ for all $0\leq i,j\leq  \lambda-1$ with $i\not=j$. Further, by Lemma \ref{lem1},
\begin{equation}\label{eq777}
p=\mu((\cC_{k+\lambda}(\tau_k))\geq \mu(\cC^0)\geq \cdots \geq \mu(\cC^{\lambda-1}).
\end{equation}
First, we prove the following.\\
Claim 1. $\mu(\cC^l)=p$ for all $0\leq l\leq \lambda-1$.\\
For the case $l=0$ we note that $\mu(\cC_{k+\lambda}(\xi^k))=\mu(\cC_{k+\lambda}(\tau_k))=p$. Moreover,
\begin{equation}\label{wert1}
f^{-1}(\cC_{k+\lambda}(\tau_k))=\cC^0\dot\cup \bigcup_{i=0}^1 \cC_{1+k+\lambda}(i\tau_k).
\end{equation}
Since $\Phi(\bigcup_{i=0}^1 \cC_{1+k+\lambda}(i\tau_k))=\{u_{k+1},v_{k+1}\}$  we may conclude from equation \eqref{eq111}  that $\mu(\bigcup_{i=0}^1 \cC_{1+k+\lambda}(i\tau_k))=0$. Therefore, 
the case $l=0$ follows from  \eqref{wert1} and Lemma \ref{lem1}.  
Clearly,
\begin{equation}
\cC_{k+\lambda+1}(2\tau_k),f^{-1}(\cC_{k+\lambda+1}(2\tau_k)),\cdots, f^{-(\lambda-1)}(\cC_{k+\lambda+1}(2\tau_k))
\end{equation}
 are pairwise disjoint sets with $\Phi \left(\bigcup_{r=1}^{\lambda-1}f^{-r}\left(\cC_{k+\lambda+1}(2\tau_k)\right)\right)=w_0$ satisfying $\mu\left(f^{-r}\left(\cC_{k+\lambda+1}(2\tau_k)\right)\right)=p$ for $r=0,\cdots,\lambda-1$. Hence
\begin{equation}\label{eq999}
\mu\left(\bigcup_{r=0}^{\lambda-1}f^{-r}\left(\cC_{k+\lambda+1}(2\tau_k)\right)\right)=\lambda p.
\end{equation}
Assume that the claim is false. Then, it follows from \eqref{eq777} that  $\mu(\cC^{\lambda-1})<p$. Since $\mu(f^{-(\lambda-1)}(\cC_{k+\lambda+1}(2\tau_k)))=p$, there exists $\eta=\eta_1\cdots \eta_{\lambda-1} 2$ such that $\cC_{k+2\lambda}(\eta\tau_k)\not=\cC^{\lambda-1}$ with $\mu(\cC_{k+2\lambda}(\eta\tau_k))>0$. Here $\cC_{k+2\lambda}(\eta\tau_k)\not=\cC^{\lambda-1}$ means that $\eta_i\not=1$ for some $i=1,\cdots,\lambda-1$. We conclude that there must exist a cylinder $\cC=\cC(\eta)$ of length $k+2\lambda+1$ contained in  $f^{-1}(\cC_{k+2\lambda}(\eta\tau_k))$ with $\mu(\cC)>0$.  Since $\eta_i\not=1$, $v_1\not\in\Phi(\cC)$. Moreover, since $\mu(\Phi^{-1}(u_1))=0$ we conclude that $\Phi(\cC)\not=\{u_1\}$. Hence  $\Phi(\cC)=\{w_0\}$. On the other hand, $\cC\cap \bigcup_{r=0}^{\lambda-1}f^{-r}\left(\cC_{k+\lambda+1}(2\tau_k)\right)=\emptyset$. Therefore \eqref{eq999} implies $\mu(\Phi^{-1}(w_0))>\lambda p$ with is a contraction. This proves Claim 1.\\
Next we define cylinder $\tC^0=\cC_1(2)$ and $\tC^l=\cC_{l+1}(1^l2)$ for  $1\leq l\leq \lambda-1$.
Claim 2. $\mu(\tC^l)=p$ for all $l=0,\cdots,\lambda-1.$\\
Obviously, $\C^l\subset \tC^l$ for all  $0\leq l\leq \lambda-1$. Thus, by Claim 1, $\mu(\tC^l)\geq p$. On the hand hand, we observe that $\tC^0, \cdots, \tC^{\lambda-1}$ are pairwise disjoint
cylinders with $\Phi(\tC^l)=w_0$ for all  $0\leq l\leq \lambda-1$.  Hence, $\sum_{l=0}^{\lambda-1}\mu(\tC^l)\leq \lambda p$. Putting these facts together proves the claim.\\
Claim 3. $\mu(\{\xi^k\})=p$.\\
The statement $\mu(\{\xi^k\})\leq p$ follows from  $\mu(\cC_{k+\lambda}(\xi^k))=p$ since  $\xi^k\in \cC_{k+\lambda}(\xi^k)$. Since$(\mu(\cC_{j+\lambda}(\xi^k)))_{j\geq 1}$ is a non-increasing sequence with limit $\mu(\{\xi^k\})$, it suffices to show that $\mu(\cC_{j(k+\lambda)}(\xi^k))\geq p$ for all $j\geq 1$. Note that the case $j=1$ is part of the assumption. Suppose on the contrary that there exists $j>1$ such that
 $\mu(\cC_{j(k+\lambda)}(\xi^k))< p$. Further, suppose $j$ is the smallest integer with this property. Recall that $\tau_k^{j-1}$ denotes the $(j-1)$-times concatenation of the block $\tau_k$. It follows that there exists a block $\eta=\eta_1\cdots \eta_{k+\lambda}$ with $\eta\not=\tau_k$ such at $\mu(\cC_{j(k+\lambda)}(\tau_k^{j-1}\eta))>0$. We conclude from Lemma \ref{lem1} that
 \begin{equation}\label{eqwich}
 \begin{split}
 0<\mu(\cC_{j(k+\lambda)}(\tau_k^{j-1}\eta))&\leq \mu\left(f^{(j-1)(k+\lambda)-1}(\cC_{j(k+\lambda)}(\tau_k^{j-1}\eta))\right)\\
 &=\mu(\cC_{1+k+\lambda}(2\eta)).
 \end{split}
 \end{equation}
Note that $\cC_{1+k+\lambda}(2\eta)$ and $\cC_{1+k+\lambda}(2\tau_k)$ are disjoint cylinders contained in $\cC_1(2)=\tC^0$. Since $\mu(\cC_{1+k+\lambda}(2\tau_k))=p$ (see Claim 1) we are able to deduce from $\eqref{eqwich}$ that $\mu(\cC_1(2))>p$ which is a contradiction to Claim 2 with $l=0$. This completes the proof of the Claim 3.
\\
To complete the proof of the proposition it remains to show that Claim 3 holds for all the points in the orbit of $\xi^k$.  Let $\zeta^k=f^l(\xi^k)$ for some $l=1,\cdots,k+\lambda-1$.  Since $f^{k+\lambda-l}(\zeta^k)=\xi^k$ we conclude from Lemma \ref{lem1} that $\mu(\{\zeta^k\})\leq \mu(\{\xi^k\})$. A similar argument shows $\mu(\{\xi^k\})\leq \mu(\{\zeta^k\})$. Hence $\mu(\{\zeta^k\})= \mu(\{\xi^k\})=p$, and the proof of the proposition is complete.
\end{proof}

\begin{theorem}\label{thmfin}
Let $\Phi$ be the potential defined in \eqref{defphipot}. Then $\cH(w_k)=0$ for all $k\in \bN$ and $\cH(w_\infty)=\log 2$. In particular, $w\mapsto \cH(w)$ is discontinuous at $w_\infty$. 

\end{theorem}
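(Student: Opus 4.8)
The plan is to establish the two equalities $\cH(w_k)=0$ and $\cH(w_\infty)=\log 2$ separately. For the first, I would argue that by the Corollary following Proposition \ref{prop1}, the only periodic point measure with rotation vector $w_k$ is $\mu_{\xi^k}$, and I want to upgrade this to the statement that $\mu_{\xi^k}$ is the \emph{only} invariant measure with $\rv(\mu)=w_k$. This is exactly what Proposition \ref{lem2} is designed for: I need to check that any $\mu\in\cM$ with $\rv(\mu)=w_k$ satisfies the hypotheses $\mu(\Phi^{-1}(w_0))=\lambda p$ and $\mu(\cC_{l+\lambda}(\xi^l))=p$ for $l=1,\dots,k$, where $p=\tfrac1{k+\lambda}$. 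The natural way to see this: $w_k$ is an extreme point of $\cR(\Phi)$, so if $\rv(\mu)=w_k$ then $\mu$ must be ``concentrated'' on the relevant fibers; concretely, writing $\mu$ as an integral over its ergodic decomposition and using that each ergodic component also has rotation vector in $\cR$, extremality of $w_k$ forces each component to have rotation vector $w_k$ as well. Then for ergodic $\mu$ one uses the geometry of $\cV$ (that $w_k$ is extreme, that the $y$-coordinate structure coming from \eqref{defphipot} pins down which cylinders carry mass, mimicking the concatenation bookkeeping in \eqref{eq66}) to extract the precise measures of $\Phi^{-1}(w_0)$ and of the cylinders $\cC_{l+\lambda}(\xi^l)$. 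Once the hypotheses of Proposition \ref{lem2} are verified, $\mu=\mu_{\xi^k}$, hence $\cH(w_k)=h_{\mu_{\xi^k}}(f)=0$ since $\mu_{\xi^k}$ is a periodic point measure.

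For $\cH(w_\infty)=\log 2$, the upper bound is immediate: $\cH(w_\infty)\leq\htop(f|_{?})\leq\log 3$ is too crude, so instead I note that any $\mu$ with $\rv(\mu)=w_\infty=(0,0)$ must give zero mass to the set where $\Phi$ has positive first coordinate, i.e. $\mu$ is supported (up to null sets) on $X(\infty)=S^\bN$; more carefully, $\int x\text{-coord of }\Phi\,d\mu=0$ together with nonnegativity of that coordinate forces $\mu(\Phi^{-1}([0,a]\times\{0\}\setminus\{w_\infty\}))$ and the $v_l,u_l$ fibers to have controlled mass, and pushing this through shows $\mu$ is carried by the subshift $S^\bN$ on two symbols, whence $h_\mu(f)\leq\log 2$. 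The lower bound follows by exhibiting the Bernoulli$(\tfrac12,\tfrac12)$ measure on $S^\bN=\{0,1\}^\bN$: it is $f$-invariant, satisfies $\rv(\mu)=w_\infty$ because $\Phi\equiv w_\infty$ on $X(\infty)$, and has entropy $\log 2$. Combining, $\cH(w_\infty)=\log 2$.

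The discontinuity claim is then a one-liner: by Proposition \ref{prop000}(i), $w_k\to w_\infty$, but $\cH(w_k)=0\not\to\log 2=\cH(w_\infty)$, so $w\mapsto\cH(w)$ is discontinuous at $w_\infty$.

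I expect the main obstacle to be the reduction step for $\cH(w_k)=0$: verifying that an \emph{arbitrary} invariant (not merely periodic) measure $\mu$ with $\rv(\mu)=w_k$ actually satisfies the cylinder-mass hypotheses of Proposition \ref{lem2}. The periodic case was handled by the explicit concatenation formula \eqref{eq66}, but for general $\mu$ one needs an ergodic-decomposition argument combined with extremality of $w_k$ to localize the measure, and then a careful accounting — analogous to \eqref{eq55}–\eqref{eq66} but at the level of measures rather than single orbits — to pin down $\mu(\Phi^{-1}(w_0))$ and $\mu(\cC_{l+\lambda}(\xi^l))$ exactly. The parallel reduction for $\cH(w_\infty)$ (showing $\rv(\mu)=w_\infty$ forces $\mu$ onto $S^\bN$ modulo null sets) is of the same flavor but easier, since $w_\infty$ is a vertex where the whole bottom edge $[0,a]\times\{0\}$ degenerates.
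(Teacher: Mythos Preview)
Your overall architecture matches the paper: reduce $\cH(w_k)=0$ to showing that any $\mu$ with $\rv(\mu)=w_k$ satisfies the hypotheses of Proposition~\ref{lem2}, and handle $\cH(w_\infty)=\log 2$ via the two-symbol subshift $\Phi^{-1}(w_\infty)=\{0,1\}^{\bN}$ together with the variational principle. Your treatment of $w_\infty$ is fine and essentially what the paper does (the paper compresses your argument by noting that $w_\infty$ is extreme in the convex hull of $\Phi(X)$, so $\rv(\mu)=w_\infty$ forces $\mu(\Phi^{-1}(w_\infty))=1$ directly).

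The gap is in the $w_k$ reduction, and it is precisely the obstacle you flag. Two specific points. First, the ergodic decomposition detour buys nothing: after reducing to ergodic $\mu$ with $\rv(\mu)=w_k$ you still face exactly the same problem, and the paper simply treats arbitrary $\mu$ throughout. Second, and more importantly, ``mimicking the concatenation bookkeeping of \eqref{eq66}'' does not go through as stated. When you decompose $\int\Phi\,d\mu$ over the natural pieces, the contribution from the cylinders $\Phi^{-1}(v_l)$ with $l>k$ produces a point $v_\infty$ which is a convex combination of the $v_l$; these lie on the graph of $h$, \emph{strictly above} $\cR(\Phi)$, so extremality of $w_k$ in $\cR(\Phi)$ is insufficient to force that contribution to vanish. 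The paper resolves this by passing to a larger convex set $\cG$, the intersection of the two closed half-planes bounded by the lines through $w_{k-1},w_k$ and through $w_k,w_{k+1}$: one checks that $w_1,\dots,w_k$, $v_\infty$, and the ``residual'' point $\omega_0\in[0,a]\times\{0\}$ all lie in $\cG$, and that $w_k$ is still extreme in $\cG$. That is the step that pins down $\mu(Y_{gk})=\mu(Y_0)=0$ and $p_1(1)=\cdots=p_k(1)=\tfrac{1}{k+\lambda}$. There is also a separate (and nontrivial) preliminary step you do not mention: establishing $\mu(\Phi^{-1}(w_0))\ge\lambda p$, which the paper obtains by constructing disjoint sets $\tY_1,\dots,\tY_\lambda\subset\Phi^{-1}(w_0)$, each of measure $p$, from preimage chains of the cylinders $V_l(0)$ and $V_l(2)$. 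Without these two ingredients the hypotheses of Proposition~\ref{lem2} are not verified.
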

\begin{proof}
Since $w_\infty$ is an extreme point of $\cR(\Phi)$ as well as an extreme point of $\Phi(X)$, we obtain that $\rv(\mu)=w_\infty$ if and only $\mu(\Phi^{-1}(w_\infty))=1$. Note that $f\vert_{\Phi^{-1}(w_\infty)}=f\vert_{\{0,1\}^\bN}$ has topological entropy equal to $\log 2$. Therefore, $\cH(w_\infty)=\log 2$ is consequence of the variational principle for the entropy, see e.g. \cite{Wal:81}.\\
Fix $k\in \bN$ and let $\mu\in \cM$ with $\rv(\mu)=w_k$. Our goal is to show that $\mu=\mu_{\xi^k}$ which obviously suffices to prove the theorem.\\
Recall from \eqref{defphipot} that $\Phi^{-1}(v_l)=\cC_{l+\lambda}(1^{l+\lambda-1}2)$ which implies
\begin{equation}
f\left(\Phi^{-1}(v_{l+1})\right)=\Phi^{-1}(v_{l})
\end{equation}
for all $l\in \bN$. Therefore,  by Lemma \ref{lem1},
\begin{equation}\label{eqtt}
\cdots \leq \mu(\Phi^{-1}(v_{l+1}))\leq \mu(\Phi^{-1}(v_{l}))\leq \cdots\leq \mu(\Phi^{-1}(v_{2}))\leq  \mu(\Phi^{-1}(v_{1})).
\end{equation}
Next, we define several sets. We define $V_1(1)=\Phi^{-1}(v_{1})=\cC_{1+\lambda}(1^{\lambda}2)$ and $p=p_1(1)\eqdef\mu(V_1(1))$. Since $\rv(\mu)=w_k$, \eqref{defphipot} and \eqref{eqtt} imply $p_1(1)>0$. For $l\geq 2$ and $i=0,1,2$ we define $V_{l}(i)=\cC_{l+\lambda}(i1^{l+\lambda-2}2)$ and $p_{l}(i)=\mu(V_{l}(i))$. Since 
\begin{equation}
f^{-1}(V_l(1))=V_{l+1}(0)\dot\cup   V_{l+1}(1)  \dot\cup V_{l+1}(2)
\end{equation}
 we have $p_l(1)=p_{l+1}(0)+p_{l+1}(1)+p_{l+1}(2)$ for all $l\geq 1$. Moreover, $\Phi\vert_{V_l(0))}=u_{l}$, $\Phi\vert_{V_l(1))}=v_{l}$ and $\Phi\vert_{V_l(2))}=w_0$. Next we consider the  pre-images of $V_l(0)$. Fix $l\geq 2$ and let $j\in \bN$. We define 
 \begin{equation} 
 U_l^j=\{\xi\in f^{-j}(V_l(0)):\xi_1=2\, \, \mbox{ and } \, \, \xi_2,\cdots, \xi_j\in \{0,1\} \},
 \end{equation}
 that is
 \begin{equation}\label{equn1}
 U_l^j=\bigcup_{i_2,\cdots, i_j\in \{0,1\}} \cC_{j+l+\lambda} (2i_2\cdots i_j 01^{l+\lambda-2}2)\subset f^{-j}(V_l(0)).
 \end{equation}
We note that \eqref{equn1} represents $U_l^j$ as a pairwise disjoint union of sets. Moreover, 
$U_l^j\cap U_{l'}^{j'}=\emptyset$ for all $l,l'\in\bN$   and all $j,j'\geq 1$  whenever $l\not=l'$ or $j\not=j'$ or both.
We define $U_l=\bigcup_{j\geq 1} U_l^j$ and claim the following.\\
Claim 1. $\mu\left(U_l\right) = \mu(V_l(0))=p_l(0)$.\\
To prove the claim we consider sets 
\begin{equation}
\tU_l(j)=\bigcup_{i_1,\cdots, i_j\in \{0,1\}} \cC_{j+l+\lambda} (i_1\cdots i_j 01^{l+\lambda-2}2)\subset f^{-j}(V_l(0)).
\end{equation}
It follows that $(\tU_l(j))_{j\geq 1}$ is a  sequence of pairwise disjoint sets. Hence, $\lim_{j\to\infty}\mu(\tU_l(j))=0$. Further, by construction,
\begin{equation}
\mu(V_l(0))-\mu(\tU_l(j))= \mu(f^{-j}(V_l(0)))-\mu(\tU_l(j))=\sum_{i=1}^{j} \mu(U_l^i).
\end{equation}
Therefore, Claim 1 follows from $\mu(U_l)=\sum_{i=1}^\infty \mu(U_l^i)$.\\
Claim 2. $\mu(\Phi^{-1}(w_0))\geq \lambda p$.\\
To prove the claim we   first construct $\tY_1=\tY_1(\mu)\subset X$ with $\mu(\tY_1)=p$ such that for all $\xi\in \tY_1$ we have $\xi_1=2$ and $\xi_2,\cdots,\xi_\lambda\not=2$. 
By construction, $(V_l(1))_{l\geq 1}$ is  a sequence of pairwise disjoint sets.
Thus, $\lim_{l\to\infty}\mu(V_l(1))=0$. By applying that the sets
\begin{equation}
V_2(0),V_2(2),V_3(0),V_3(2),\cdots,V_l(0),V_l(2),\cdots 
\end{equation}
 are pairwise disjoint, 
 a similar argument
as in the proof of Claim 1 shows
\begin{equation}\label{eq38}
\mu\left(\bigcup_{l\geq 2} V_l(0) \cup V_l(2)\right)=\sum_{l=2}^{\infty} \left[\mu(V_l(0)) + \mu(V_l(2))\right]=V_1(1)=p.
\end{equation}
Evidently, $U_l\cap V_{l'}(2)=\emptyset$ for all $l,l'\geq 2$. Therefore, we may conclude from \eqref{eq38}  and Claim 1 that
\begin{equation}
\mu\left(\bigcup_{l\geq 2} U_l \cup V_l(2)\right)=p.
\end{equation}
We define $\tY_1=\bigcup_{l\geq 2} U_l \cup V_l(2)$. By construction, if $\xi\in \tY_1$ then $\xi_1=2$ and $\xi_2,\cdots,\xi_\lambda\not=2$. Define $\tY_2= f^{-1}(\tY_1),\cdots,\tY_\lambda= f^{-\lambda-1}(\tY_1)$. It follows that
$\tY_1,\cdots,\tY_\lambda$ are pairwise disjoint sets with $\mu(\tY_1)=\cdots =\mu(\tY_\lambda)=p$. 
We define
\begin{equation}
Y_0(\lambda)=\tY_1\cup\cdots\cup\tY_\lambda.
\end{equation}
Hence, $\mu(Y_0(\lambda))=\lambda p$. We note that $\Phi(\xi)=w_0$ for all $\xi\in Y_0(\lambda)$ which completes the proof of Claim 2.\\
Next we  compute  $\rv(\mu)$ by integrating $\Phi$ over various subsets of $X$. Define
\begin{equation}
Y_{k}=\bigcup_{l=1}^k V_l(1) = \Phi^{-1}\left(\{v_1,\cdots,v_k\}\right)\, \, \mbox{ and }\, \, Y_{gk}=  \Phi^{-1}\left(\bigcup_{l>k}\{v_l\}\right)
\end{equation}
and
\begin{equation}
Y_0=\Phi^{-1}([0,a]\times\{0\})\setminus Y_0(\lambda).
\end{equation}
 Moreover, define $p_{gk}=\mu(Y_{gk})$ and $p_0=\mu(Y_0)$.
Thus $X=Y_0(\lambda)\cup Y_k\cup Y_{gk}\cup Y_{0}$ is a union of pairwise disjoint sets. To compute $\int_{Y_0(\lambda)\cup Y_k} \Phi\, d\mu$ we define $p_k=p_k(1)$ and $p_l=p_{l}(1)-p_{l+1}(1)$ for $l=k-1,\cdots, 1$. Hence $p=\sum_{l=1}^kp_l$. By making a telescope sum argument  and applying \eqref{eq2} we obtain
\begin{equation}\label{r1}
\begin{split}
\int_{Y_0(\lambda)\cup Y_k} \Phi\, d\mu & = \lambda p w_0+\sum_{l=1}^k p_l(1) v_l\\ 
&=  \sum_{l=1}^k p_l\left(\lambda w_0+\sum_{i=1}^l v_i\right)  \\
&= \sum_{l=1}^k  p_l(l+\lambda) w_l.
\end{split}
\end{equation}
Evidently we have $p_l\leq 1/(l+\lambda)$ for $l=1,\cdots,k$. If  $\mu(Y_{gk})\not=0$  we define
\begin{equation}\label{r2222}
\int_{Y_{gk}} \Phi \, d\mu =\sum_{l=k+1}^\infty p_l(1) v_l =\mu(Y_{gk})\sum_{l=k+1}^\infty \frac{p_l(1)}{p_{gk}}  v_l\eqdef \mu(Y_{gk}) v_{\infty},
\end{equation}
otherwise we set $v_\infty=0$.
Similarly, if $\mu(Y_0)\not=0$, we define 
\begin{equation}\label{r3}
\int_{Y_0} \Phi\, d\mu = \mu(Y_0)\left(\frac{1}{\mu(Y_0)} \int_{Y_0} \Phi\, d\mu \right)\eqdef \mu(Y_0) \omega_0,
\end{equation}
otherwise we set $\omega_0=0$.
It follows from the definition of $Y_0$ that $\omega_0\in [0,a]\times \{0\}$. 
Let $\ell_1$ denote the line through $w_{k}$ and $w_{k-1}$, and let $\ell_2$ denote the line through $v_{k+1}$ and $w_{k}$. 
Let $\cG$ denote the intersection of the two closed half-spaces  of points on and below the lines $\ell_1$ and $\ell_2$ respectively. 
Clearly $\cG$ is convex.
By definition, the closed line segment $[w_{k},w_{k-1}]$ is contained in $\ell_1$.  Moreover,
by \eqref{eq2} the closed line segment $[w_{k+1},w_k]$ is contained in $\ell_2$. 
We conclude that $\ell_i$ is the supporting hyperplane of the face $[w_{k+i-1},w_{k+i-2}]$ of $\cR(\Phi)$. This shows  that $\cR(\Phi)\subset \cG$.
It follows from Proposition \ref{prop000} (ii) that  
 $w_{k}$ is an extreme point of $\cG$. Moreover, $\{v_l: l\geq k+1\}\subset \cG$ which together with \eqref{r2222} implies that $v_\infty\in \cG$.
It follows from
\begin{equation}
\rv(\mu)=\int_{Y_0(\lambda)\cup Y_k} \Phi\, d\mu+\int_{Y_{gk}} \Phi\, d\mu=\int_{Y_0} \Phi\, d\mu
\end{equation}
and equations \eqref{r1},\eqref{r2222} and \eqref{r3} that $\rv(\mu)$ is a convex combination of  the points $w_1,\cdots,w_k,v_\infty, \omega_0$ all of which belong to $\cG$.
By using that $w_k=\rv(\mu)$ is an extreme point of $\cG$  we may conclude that this convex combination must coincide with $w_k$ itself. Hence $p_1=\cdots=p_{k-1}=0$, $\mu(Y_0)=  \mu(Y_{gk})=0$ and $p=p_1(1)=p_2(1)=\cdots=p_k(1)=p_k=\frac{1}{k+\lambda}$. 
This shows that  $\mu$ satisfies the assumptions of Proposition \ref{lem2}. Thus, by Proposition   \ref{lem2} we have $\mu=\mu_{\xi^k}$ which completes the
proof of the theorem.
\end{proof}

\end{document}